\documentclass[12pt, a4paper]{amsart}
\usepackage{amssymb,amscd, hyperref, epsf}
\usepackage{graphicx}

\let\kappa=\varkappa
\let\epsilon=\varepsilon
\let\phi=\varphi

\def\R{\mathbb R}

\newtheorem{thm}{Theorem}
\newtheorem{construction}[thm]{Construction}

\theoremstyle{definition}

\newtheorem{remark}[thm]{Remark}

\numberwithin{equation}{subsection}

\begin{document}

\title[Inequalities of Constructive Mathematics]{Inequalities and equalities of means in Constructive Mathematics}
\author[T. Kawakubo]{Teruma Kawakubo\\
\normalfont\small St. Mary's International School, 1-6-19 Seta, Setagaya-ku, Tokyo 158-0095, Japan\\
\normalfont\small\href{mailto:teruma.kawakubo@gmail.com}{teruma.kawakubo@gmail.com}\\
\normalfont\small August 2026}
\date{}

\subjclass{Primary 26E60, 26E40; Secondary 03F60} 

\begin{abstract}
    We study the inequalities and equalities of means of numbers in constructive mathematics. The parameters of such means are constructive real numbers. We show that it is impossible to have an algorithm that can always decide whether the two means are equal or not. Of course, this problem can always be solved if the parameters are rational numbers.
\end{abstract}

\maketitle

\section{Introduction}
The {\em Arithmetic Mean (AM)} is one of the most known means and is used daily. Additionally, other means such as the {\em Geometric Mean (GM)}, the {\em Harmonic Mean (HM)}, and the {\em Quadratic Mean (QM)} are also heavily used in data analysis and statistics.
To recall, let \(x_1\), \(x_2\), \dots, \(x_n\in \R^+\), then: \[AM = \frac{x_1 + x_2 + \cdots + x_n}{n}\] \[GM = \sqrt[n]{x_1 x_2 \cdots x_n}\] \[HM = \frac{1}{\frac{\frac{1}{x_1}+\frac{1}{x_2}+\cdots+\frac{1}{x_n}}{n}}=\frac{n}{\frac{1}{x_1}+\frac{1}{x_2}+\cdots+\frac{1}{x_n}}\] \[QM = \sqrt{\frac{x_1^2+x_2^2+\cdots+x_n^2}{n}}\]
Additionally, these means satisfy the inequality \(HM\le GM\le AM\le QM\).
More generalized means are also used in data analysis and statistics, and they satisfy the inequalities discussed later.

Now, the {\em generalized p-power mean} is a function defined as (for a real number \(p\)) \[M_p(x_1, x_2, \dots, x_n)= \left(\frac{x_1^p+x_2^p+\cdots+x_n^p}{n}\right)^\frac1p \] and in particular, when \(p=1\) it becomes the AM, for \(p=2\) it becomes the QM, for \(p=-1\) it becomes the HM, and for \(p=0\) it becomes the GM, see \cite{Hardy, Korovkin}.

Additionally, it is known that (see \cite{Hardy}) for \(p<q\), \[M_p(x_1, x_2, \dots, x_n)\le M_q(x_1, x_2, \dots, x_n)\] and equality is when \(x_1=x_2=\cdots=x_n\). This is also applied to a special case: the inequality of \(HM\le GM\le AM\le QM\), and the equality only occurs when \(x_1=x_2=\cdots=x_n\).

In {\em Constructive Mathematics}, the Principle of Excluded Middle, which states that \(P \lor \lnot P\) always holds, is not necessarily true. Instead, a constructive mathematician must specify the true part of the disjunction. Furthermore, proof by contradiction is not valid, as proving that an object can not not exist does not imply that the object exists, in particular because one does not know how to construct it. In the Russian school of constructive mathematics, the Markov Principle, developed by A. A. Markov \cite{Markov2} says that if it is known that a decidable subset S of \(\mathbb{N}\) can not be empty then you can find an element of it. (One checks the natural numbers one by one until you find the element of S. The weak point in this is that it can take thousands of years to find it and the American School of Constructive Mathematics does not allow this conclusion, see \cite{Bishop}) This is also called the {\em principle of constructive choice} and sometimes allows one to argue by contradiction.

Constructive Mathematics was also developed by E. Bishop \cite{Bishop}, in a different but largely similar way. However, Bishop's approach does not allow the use of Markov's principle, as briefly mentioned above. Thus, most theorems and constructions in Bishop's approach can be brought over to Markov's approach without much change, but the converse is not necessarily true.

A {\em Constructive Real Number (CRN)} is a pair of algorithms \((A, B)\). An algorithm \(A\) ({\em called the fundamental sequence}) transforms natural numbers into rational numbers that are members of a Cauchy sequence. Algorithm \(B\) ({\em convergence regulator}) transforms natural numbers into natural numbers and guarantees the speed of convergence in itself of the sequence \(A\). That is, for every integer \(n>0\) and \(i, j \ge B(n)\) then \(\left| A(i)-A(j)\right| < 2^{-n}\). If a \(CRN\) \((A, B)\) is denoted by \(x\), then \(x(n)\) denotes \(A(n)\).

The \(CRNs\) appear naturally, for example when you measure objects with better and better precision but you never know the exact measurement.

Classically, inequalities and equalities of the power means (and their special cases) are decidable. But, when the problem parameters are \(CRNs\) it is not always so. In Constructive Mathematics the situation is delicate, as one cannot always tell if a pair of \(CRNs\) are equal and this can be always done in classical mathematics and for \(CRNs\) that are rational numbers.

\begin{remark}\label{remark1}
    When the parameters are rational numbers, thought of as a pair of integer numbers (numerator and denominator), the equality and inequality of the means is decidable by simplifying the expressions and taking them to the common denominator.
\end{remark}

The proofs in this paper will show that the inequalities and equalities of the \(CRNs\) are not always decidable.

\section{Main Results}

Let \(P\) be a (partially defined) algorithm that transforms natural
numbers into 0 and 1 and that is not extendable to a totally defined
algorithm, see ~\cite{Kleene} and Theorem 12 of \cite{Vereshchagin}.
Note that the Halting Problem for this algorithm is undecidable and the set of numbers on which \(P\) gives output 1, as well as the set of numbers which it outputs 0, is undecidable as well. Theorem 13 of \cite{Vereshchagin} states that these two disjoint sets cannot be separated by a decidable set. (A separation by a decidable set is defined to be when a decidable set \(C\) contains one of the two disjoint sets \(X\) or \(Y\) and is disjoint to the other.)  
\begin{construction}\label{construction2} For each natural $n$ we define the following computer-generated sequence of rational numbers $a_n^1$.
\begin{itemize}
\item For each $k$ we put $a_n^1(k)=3+2^{-k}$ if the algorithm $P$ did not yet finish working on input $n$ by step $k$.
\item we put $a_n^1(k)=3+2^{-m}$ if $P$ has finished working on $n$ by step $k$. Here, $m$ is the step number when $P$ finished working.
\end{itemize}
\end{construction}

{\em For each $n$, the sequence $a_n^1$ converges in itself at a geometric progression speed thus giving a \(CRN\).}

Also, we construct another sequence of rational numbers \(a_n^2\). We let this sequence to be a constant sequence, such that for each natural \(n\) we let \(a_n^2 = 3\).

\begin{thm}\label{theorem3}
    Let \(AM = \frac{a_n^1 + a_n^2}{2}\) and \(GM = \sqrt{a_n^1 a_n^2}\) Then, it is undecidable whether \(AM = GM\) or \(AM > GM\).
\end{thm}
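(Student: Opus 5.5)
The plan is a reduction to the Halting Problem for the algorithm \(P\). First I compute the limit of \(a_n^1\) in Construction~\ref{construction2}. If \(P\) never halts on \(n\), then \(a_n^1(k)=3+2^{-k}\) for every \(k\), so \(a_n^1=3\). If \(P\) halts on \(n\) at step \(m\), then \(a_n^1(k)=3+2^{-k}\) for \(k<m\) and \(a_n^1(k)=3+2^{-m}\) for \(k\ge m\), so \(a_n^1=3+2^{-m}>3\).

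Next I check that \(a_n^1\) is a legitimate CRN uniformly in \(n\). The terms \(a_n^1(i)\) and \(a_n^1(j)\) both lie in the interval \([3+2^{-\min(i,j)}\) down to \(3]\), more precisely their difference is at most \(2^{-\min(i,j)}\). Hence the regulator \(B(k)=k+1\) works for every \(n\), and the map \(n\mapsto a_n^1\) is given by a single algorithm. Since \(a_n^2=3\), the products and sums involved are CRNs as well, so \(AM\) and \(GM\) are computable from \(n\).

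Then I compare the two means. For positive reals \(x,y\) we have
\[
\frac{x+y}{2}-\sqrt{xy}=\tfrac12\bigl(\sqrt x-\sqrt y\bigr)^2 ,
\]
so \(AM=GM\) exactly when \(a_n^1=a_n^2\). If \(a_n^1\neq a_n^2\), the strict inequality \(AM>GM\) holds. Combining this with the previous step gives
\[
AM=GM \iff P \text{ does not halt on } n, \qquad AM>GM \iff P \text{ halts on } n .
\]

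Finally I argue by contradiction against the existence of a decision algorithm. Suppose some algorithm, given \(n\), or equivalently given the pair of CRNs \(a_n^1,a_n^2\), always decides whether \(AM=GM\) or \(AM>GM\). Composing it with the uniform construction above would decide, for every \(n\), whether \(P\) halts on \(n\). This contradicts the undecidability of the Halting Problem for \(P\) noted before Construction~\ref{construction2}. One could also produce a decidable set separating the outputs of \(P\), which Theorem 13 of \cite{Vereshchagin} forbids, but the halting reduction is already enough.

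The main obstacle is stating precisely what "decidable" means here. The decision procedure must be an algorithm that acts on the CRNs, that is, on their codes as pairs of algorithms, and whose input is supplied uniformly in \(n\). Also, the argument has to avoid using Markov's principle in any essential way. It needs only the fact that \(n\mapsto a_n^1\) is computable, together with the exact dichotomy established above.
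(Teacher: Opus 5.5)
Your proposal is correct and follows essentially the same route as the paper: compute the limit of \(a_n^1\) in the halting and non-halting cases, use the equality case of AM--GM, and turn a hypothetical decision algorithm into a procedure deciding whether \(P\) halts on \(n\). You also make explicit several details the paper leaves implicit: a uniform convergence regulator, the identity \(\frac{x+y}{2}-\sqrt{xy}=\frac12(\sqrt x-\sqrt y)^2\) in place of a citation, and the point that ``undecidable'' must be read uniformly in \(n\). These improve the exposition but do not change the argument.
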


\begin{proof}
    Recall that \(AM=GM\) if and only if all the inputs are equal. i.e. \(\frac{x_1+x_2+\cdots +x_n}{n}=\sqrt[n]{x_1x_2\cdots x_n}\) if and only if \(x_1=x_2=\cdots=x_n\) \cite{Bullen}.

    If \(P\) never terminates, then, from the above construction, \(a_n^1\) will converge to 3. Then, \(AM = GM\). If \(P\) does eventually terminate, then \(a_n^1\) will not converge to 3, but to a number slightly larger that 3. Then, \(AM > GM\).

    For the sake of contradiction, assume that there exists an algorithm \(Q(CRN_1, CRN_2)\) such that it takes in a pair of \(CRNs\) and can always decide if the \(AM\) of the pair of \(CRNs\) is equal to the \(GM\) of the pair of \(CRNs\) (i.e. \(AM=GM\)).

    Since \(AM=GM\) (of \(a_n^1\) and \(a_n^2\)) if only if \(P\) never terminates, algorithm \(Q\) can be used to decide whether \(P\) terminates or not and hence it is a decision program for \(P\) that we know cannot exist.
    
    From this result, it shows that whether \(AM=GM\) is undecidable.
\end{proof}

The above theorem shows that the \(AM-GM\) inequality is undecidable, and then now we will show that the general power mean inequality is also undecidable.

\begin{thm} \label{theorem4}
    Let \(p\) and \(q\) be rational numbers such that \(p<q\), then it is undecidable whether \(M_p(a_n^1, a_n^2)=M_q(a_n^1, a_n^2)\) or \(M_p(a_n^1, a_n^2)<M_q(a_n^1, a_n^2)\).
\end{thm}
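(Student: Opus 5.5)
The plan is to mirror the proof of Theorem~\ref{theorem3}, replacing the AM--GM equality criterion by the equality case of the power mean inequality. For $p<q$ and positive $x_1,x_2$ we have $M_p(x_1,x_2)\le M_q(x_1,x_2)$, with equality if and only if $x_1=x_2$ \cite{Hardy}. So everything reduces to deciding whether $a_n^1=a_n^2=3$. First I will note that $M_p(a_n^1,a_n^2)$ and $M_q(a_n^1,a_n^2)$ are genuine \(CRNs\). Both arguments lie in $[3,4]$, where $x\mapsto x^p$ and $t\mapsto t^{1/p}$ are uniformly continuous with computable moduli, and $p,q$ are rational. If $p=0$ or $q=0$, the mean is the geometric mean, which is also computable. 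Hence the comparison question makes sense for these inputs.

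Next comes the case analysis on $P$ at input $n$. If $P$ never halts on $n$, then $a_n^1(k)=3+2^{-k}\to 3$, so $a_n^1=a_n^2$ and $M_p=M_q$. If $P$ halts at step $m$, then $a_n^1=3+2^{-m}\neq 3$, and the strict inequality $M_p<M_q$ holds.

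The reduction then goes as follows. Suppose an algorithm $Q$ took the \(CRNs\) $a_n^1,a_n^2$ and always correctly answered ``$M_p=M_q$'' or ``$M_p<M_q$''. Running $Q$ on $(a_n^1,a_n^2)$ for each $n$ would decide whether $P$ halts on $n$, since $M_p=M_q$ holds exactly when $P$ does not halt on $n$. This contradicts the undecidability of the halting problem for $P$, stated before Construction~\ref{construction2}. Note that $n\mapsto(a_n^1,a_n^2)$ is computable uniformly in $n$, so the composite is indeed an algorithm.

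I expect the main obstacle to be the equality case of the power mean inequality, in the form ``$x_1\neq x_2$ implies strict inequality'', used for the halting branch. Since $m$ is explicitly known there, this is harmless: $a_n^1$ is then the rational $3+2^{-m}$, and strictness is a classical fact about concrete numbers. I would cite \cite{Hardy} for it, or derive it from strict convexity of $t\mapsto t^{q/p}$ (suitably signed), with separate handling when $p$ or $q$ is $0$.
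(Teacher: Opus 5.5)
Your proposal is correct and follows essentially the same route as the paper: the equality case of the power mean inequality ($M_p=M_q$ exactly when $x_1=x_2$), the case split on whether $P$ halts on $n$ (so $a_n^1$ equals $3$ or $3+2^{-m}$), and the reduction of a hypothetical decider $Q$ to a decision procedure for the halting of $P$. Your extra checks fill in points the paper leaves implicit: that $M_p(a_n^1,a_n^2)$ and $M_q(a_n^1,a_n^2)$ are genuine \(CRNs\) (including the $p=0$ or $q=0$ case), and that $n\mapsto(a_n^1,a_n^2)$ is uniformly computable, so the composite is indeed an algorithm.
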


\begin{proof}
    This proof follows a similar argument as the proof for Theorem 3.

    As mentioned earlier, \(M_p(x_1, x_2) = M_q(x_1, x_2)\) only if \(x_1 = x_2\) and \(M_p(x_1, x_2) < M_q(x_1, x_2)\) otherwise \cite{Hardy}. 

    Now, if the program \(P\) never terminates, then \(a_n^1\) will converge to \(3\), thus \(M_p(a_n^1, a_n^2)=M_q(a_n^1, a_n^2)\). But if the program \(P\) does eventually terminate, then \(a_n^1\) will converge to a value slightly larger than \(3\), so \(M_p(a_n^1, a_n^2)<M_q(a_n^1, a_n^2)\).

    For the sake of contradiction, assume that there exists an algorithm \(Q(CRN_1, CRN_2)\) such that it takes in a pair \(CRNs\) and can always decide if the \(M_p\) of the pair of \(CRNs\) is equal to the \(M_q\) of the pair of \(CRNs\) (i.e. \(M_p(CRN_1, CRN_2) = M_q(CRN_1, CRN_2)\)).

    Since \(M_p(a_n^1, a_n^2) = M_q(a_n^1, a_n^2)\) if and only if \(P\) never terminates, algorithm \(Q\) can be used to decide whether \(P\) terminates or not. However, this cannot be true, as the domain of \(P\) is undecidable. Thus, the assumption that program \(Q\) exists is false.

    From this result, it shows that whether \(M_p(a_n^1, a_n^2) = M_q(a_n^1, a_n^2)\) is undecidable.
\end{proof}

Now, we extend this argument into 2 non-constant \(CRNs\), although the programs used are not completely independent (one works based on the output of the other).

Let \(P_1\) be a (partially defined) algorithm that transforms natural numbers into 0 and 1 and that is not extendable to a totally defined algorithm \cite{Kleene, Vereshchagin}. Then let \(P_2\) be an algorithm that runs \(P_1\) one step late on the same input, meaning that if \(P_1\) terminates on step \(i\), then \(P_2\) will terminate on step \(i+1\).  If \(P_1\) does not terminates, then \(P_2\) will also not terminate. This implies that \(P_2\) has the same set of inputs that it will terminate on as \(P_1\). Note that the Halting Problem for \(P_1\) is undecidable and the set of numbers on which \(P_1\) gives output 1 (or output 0) is undecidable. Since \(P_2\) is essentially \(P_1\) but run 1 step late, its Halting problem is also undecidable, along with the set of numbers on which it outputs 1 (or 0).

\begin{construction}\label{construction5}
    For each natural \(n\) we define the following computer-generated sequences of rational numbers \(b_n^1\) and \(b_n^2\).
    \begin{itemize}
        \item For each $k$ we put $b_n^1(k)=5+2^{-k}$ if the algorithm $P_1$ did not yet finish working on input $n$ by step $k$.

        \item we put $b_n^1(k)=5+2^{-m}$ if $P_1$ has finished working on $n$ by step $k$. $m$ is the step number when $P_1$ finished working.
    \end{itemize}
    
    We define \(b_n^2\) identically, but with algorithm \(P_2\).
\end{construction}

{\em For each $n$, the sequences $b_n^1$ and $b_n^2$ both converge in themselves at a geometric progression speed thus giving \(CRNs\).}

Now, use these \(CRNs\) to prove the following theorem:

\begin{thm}\label{theorem6}
    Let \(p\) and \(q\) be rational numbers such that \(p<q\), then it is undecidable whether \(M_p(b_n^1, b_n^2)=M_q(b_n^1, b_n^2)\) or \(M_p(b_n^1, b_n^2)<M_q(b_n^1, b_n^2)\).
\end{thm}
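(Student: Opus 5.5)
We follow the pattern of the proofs of Theorem~\ref{theorem3} and Theorem~\ref{theorem4}: a reduction from the halting problem for \(P_1\). The plan is to identify, for each \(n\), the limits of \(b_n^1\) and \(b_n^2\) in the two cases (\(P_1\) halts on \(n\) or does not). We then apply the equality criterion for power means, namely that \(M_p(x_1,x_2)=M_q(x_1,x_2)\) for \(p<q\) if and only if \(x_1=x_2\), as used in the proof of Theorem~\ref{theorem4}. This shows that a decision procedure for the equality of the means would decide the domain of \(P_1\).

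First we compute the limits. If \(P_1\) never terminates on \(n\), then neither does \(P_2\). Hence \(b_n^1(k)=b_n^2(k)=5+2^{-k}\) for all \(k\), so both \(CRNs\) equal \(5\) and \(M_p(b_n^1,b_n^2)=M_q(b_n^1,b_n^2)\). If \(P_1\) terminates on \(n\) at step \(m\), then \(P_2\) terminates at step \(m+1\). So \(b_n^1\) is eventually constant \(5+2^{-m}\), and \(b_n^2\) is eventually constant \(5+2^{-(m+1)}\). These are distinct positive rationals, so \(M_p(b_n^1,b_n^2)<M_q(b_n^1,b_n^2)\) by the strict power mean inequality \cite{Hardy}.

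Second, we do the reduction. Suppose an algorithm \(Q\) took any pair of positive \(CRNs\) and decided whether \(M_p=M_q\). The maps \(n\mapsto b_n^1\) and \(n\mapsto b_n^2\) are uniform in \(n\): the fundamental sequences are produced by simulating \(P_1\) (respectively \(P_2\)) on \(n\) for \(k\) steps, and the convergence regulator \(B(j)=j+1\) works for every \(n\). Therefore \(n\mapsto Q(b_n^1,b_n^2)\) is a total algorithm. By the first step, it outputs ``equal'' exactly when \(P_1\) does not halt on \(n\). This decides the domain of \(P_1\), which is impossible because \(P_1\) is not extendable to a total algorithm \cite{Kleene, Vereshchagin}.

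The main point needing care, and the only real difference from Theorem~\ref{theorem4}, is checking that \(b_n^1\neq b_n^2\) in the halting case. Here the one-step delay of \(P_2\) is essential, since \(2^{-m}\neq 2^{-(m+1)}\). Without it the two sequences would coincide and the means would always be equal. A second point is verifying the regulator bound \(|b_n^i(j)-b_n^i(l)|\le 2^{-\min(j,l)}\), so that the regulator \(B(j)=j+1\) gives \(|b_n^i(j)-b_n^i(l)|\le 2^{-(j+1)}<2^{-j}\) for \(j,l\ge B(j)\). This holds because after halting the value is frozen at \(5+2^{-m}\), with \(m\) no larger than the current index.
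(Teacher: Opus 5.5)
Your proposal is correct and follows the paper's proof. It uses the same reduction from the halting problem of \(P_1\) and the same key observation: the one-step delay of \(P_2\) makes the limits \(5+2^{-m}\) and \(5+2^{-(m+1)}\) distinct in the halting case, so \(M_p=M_q\) exactly when \(P_1\) does not halt on \(n\). You also spell out two things the paper leaves implicit, namely that \(n\mapsto(b_n^1,b_n^2)\) is uniform in \(n\) and that \(B\) is a valid convergence regulator; in your regulator sentence, though, the letter \(j\) serves both as the precision and as an index, so the condition \(j\ge B(j)=j+1\) should use a separate variable for the indices.
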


\begin{proof}
    The proof is almost identical to the proof for Theorem 4.

    The difference is that the condition of \(b_n^1 = b_n^2\) is when \(P_1\) does not terminate (which implies that \(P_2\) does not terminate). This is because if \(P_1\) terminates on step \(k\), then the members from \(b_n^1(k)=5+2^{-k}\) and onwards will stay at this constant. Additionally, since \(P_1\) terminates on step \(k\), \(P_2\) will terminate on step \(k+1\), thus the members from \(b_n^2(k+1)=5+2^{-(k+1)}\) and onwards will stay at that constant. Since \(5+2^{-k} \ne 5+2^{-(k+1)}\), \(b_n^1 \ne b_n^2\).

    Using contradiction with the same program \(Q(CRN_1, CRN_2)\) from the proof of Theorem 4, it can be shown that this program cannot exist, and this proves that it cannot be decided whether \(M_p(b_n^1, b_n^2)=M_q(b_n^1, b_n^2)\) or \(M_p(b_n^1, b_n^2)<M_q(b_n^1, b_n^2)\).
\end{proof}

With this result, we can see that the general power mean inequality of these two \(CRNs\) is undecidable. Again, we can extend this argument even further into an arbitrary \(n\) algorithms and \(CRNs\).

Fo an integer \(m \ge 3\), let \(P_m\) be an algorithm that runs \(P_{m-1}\) one step late on the same input, meaning that if \(P_{m-1}\) terminates on step \(i\), then \(P_m\) will terminate on step \(i+1\).  If \(P_{m-1}\) does not terminates, then \(P_m\) will also not terminate. This implies that \(P_m\) has the same set of inputs that it will terminate on as \(P_{m-1}\) (and hence as \(P_1\)). Since \(P_m\) is essentially \(P_{m-1}\) but run 1 step late, its Halting problem is also undecidable, along with the set of numbers on which it outputs 1 (or 0).

\begin{construction}\label{construction7}
    For each natural \(n\) we define the following computer-generated sequences of rational numbers \(b_n^3\), \dots, \(b_n^m\).
    For each \(3\le i\le m\):
    \begin{itemize}
        \item For each $k$ we put $b_n^i(k)=5+2^{-k}$ if the algorithm $P_i$ did not yet finish working on input $n$ by step $k$.

        \item we put $b_n^i(k)=5+2^{-l}$ if $P_i$ has finished working on $n$ by step $k$. $l$ is the step number when $P_i$ finished working.
    \end{itemize}
\end{construction}

{\em For each i, \(b_n^i\) converges in itself at a geometric progression speed thus giving \(CRNs\).}

Now, we use these \(CRNs\) from Construction 5 and 7 to prove the following theorem:

\begin{thm}\label{theorem8}
    Let \(p\) and \(q\) be rational numbers such that \(p<q\), then it is undecidable whether \(M_p(b_n^1, b_n^2, \dots, b_n^m)=M_q(b_n^1, b_n^2, \dots, b_n^m)\) or \(M_p(b_n^1, b_n^2, \dots, b_n^m)<M_q(b_n^1, b_n^2, \dots, b_n^m)\).
\end{thm}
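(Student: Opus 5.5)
The plan is to follow the proofs of Theorem 4 and Theorem 6. The argument reduces a hypothetical decision procedure for the equality of power means to a decision procedure for the halting of \(P_1\). The key fact, which we take from \cite{Hardy}, is that for \(p<q\) we have \(M_p(x_1,\dots,x_m)\le M_q(x_1,\dots,x_m)\), with equality if and only if \(x_1=\cdots=x_m\). So it suffices to show two things:
\begin{itemize}
\item all the \(b_n^i\) are equal exactly when \(P_1\) does not terminate on input \(n\);
\item they are not all equal when \(P_1\) does terminate on \(n\).
\end{itemize}

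\textbf{Step 1: all the \(P_i\) halt together.} By the inductive definition of \(P_i\) as \(P_{i-1}\) delayed by one step, if \(P_1\) terminates on \(n\) at step \(j\), then \(P_i\) terminates at step \(j+i-1\). Likewise, if \(P_1\) does not terminate on \(n\), then no \(P_i\) terminates. This follows by a simple induction on \(i\).

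\textbf{Step 2: the non-halting case.} If \(P_1\) does not halt on \(n\), then every \(b_n^i(k)=5+2^{-k}\) for all \(k\). Hence all the \(b_n^i\) are literally the same sequence, and each converges to \(5\). Therefore \(M_p=M_q\).

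\textbf{Step 3: the halting case.} If \(P_1\) halts at step \(j\), then \(b_n^i\) is eventually constant with limit \(5+2^{-(j+i-1)}\). In particular \(b_n^1\) and \(b_n^2\) have the distinct limits \(5+2^{-j}\) and \(5+2^{-(j+1)}\). Indeed all \(m\) limits are pairwise distinct, though one distinct pair is enough. So the inputs are not all equal, and the strict inequality \(M_p<M_q\) holds.

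\textbf{Step 4: the reduction.} Suppose there were an algorithm \(Q(CRN_1,\dots,CRN_m)\) deciding whether \(M_p=M_q\) for an \(m\)-tuple of CRNs. The map \(n\mapsto(b_n^1,\dots,b_n^m)\) is computable: each \(b_n^i\) is produced uniformly in \(n\) by simulating \(P_1\) with the appropriate delay, and it uses the uniform convergence regulator \(B(r)=r+1\), which works because the terms stay within \(2^{-k}\) of one another after step \(k\). Composing this map with \(Q\) would therefore decide whether \(P_1\) halts on \(n\). That would make the domain of \(P_1\) decidable, and then \(P_1\) could be extended to a total algorithm, contradicting its choice.

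The main obstacle is the uniformity and computability in Step 4. One must check that \(Q\) is applied to CRNs given by algorithms that are produced effectively from \(n\), with a valid regulator whether or not \(P_1\) halts. One must also use the fact that the equality case of the power mean inequality is exactly "all inputs equal," with \(p,q\) rational (including possibly \(0\), interpreted as the geometric mean). Everything else is a direct transfer of the two-variable argument of Theorem 6.
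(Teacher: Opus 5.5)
Your proposal is correct and follows essentially the same route as the paper. Equality of \(M_p\) and \(M_q\) forces all inputs to be equal, the delayed programs \(P_i\) halt at pairwise distinct steps so the \(b_n^i\) coincide exactly when \(P_1\) never halts, and a decider \(Q\) would then decide the halting of \(P_1\); your explicit halting step \(j+i-1\), uniform regulator, and remark that a decidable domain would let \(P_1\) be extended to a total algorithm make precise what the paper leaves implicit.
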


\begin{proof}
    As stated earlier, the generalized power mean equality, 
    
    \(M_p(x_1, x_2, \dots, x_n)=M_q(x_1, x_2, \dots, x_n)\), holds exactly when \(x_1=x_2=\cdots=x_n\) \cite{Hardy}.

    Now, for the \(CRNs\) \(b_n^1, b_n^2, \dots, b_n^m\) to be all equal, it must be that the corresponding programs, \(P_1, P_2, \dots, P_m\) must all not halt. As, if any of the program halts, it must be that all the other program halts at different steps, thus giving different \(CRNs\). A concrete example: if \(P_4\) halted on step 6, then \(P_5\) will halt on step 7; then \(b_n^4\) will converge to \(5+2^{-6}\) and \(b_n^5\) will converge to \(5+2^{-7}\). Thus, it will not be possible to have all of the computer-generated \(CRNs\) to be equal to each other if any of the program halts. Therefore, \(P_1\) must never halt (and thus all of the other programs as well).

    For the sake of contradiction, assume that there exists an algorithm \(Q(CRN_1, CRN_2, \dots, CRN_m)\) such that it takes in a set of \(CRNs\) and can always decide if the \(M_p\) of the set of \(CRNs\) is equal to the \(M_q\) of the set of \(CRNs\) (i.e. \(M_p(CRN_1, CRN_2, \dots, CRN_m)=M_q(CRN_1, CRN_2, \dots, CRN_m)\)).

    Since \(M_p(b_n^1, b_n^2, \dots, b_n^m)=M_q(b_n^1, b_n^2, \dots, b_n^m)\) if only if \(P_1\) never terminates, algorithm \(Q\) can be used to decide whether \(P_1\) (and all the other mentioned programs) terminates or not and hence it is a decision program for \(P_1\) that we know cannot exist.
    
    From this result, it shows that whether 
    
    \(M_p(b_n^1, b_n^2, \dots, b_n^m)=M_q(b_n^1, b_n^2, \dots, b_n^m)\) is undecidable.
\end{proof}

\section{Conclusions}
We have presented the proof of algorithmical undecidability of various means inequality, especially the power means inequality. The basis for such undecidability is the undecidability of the halting of programs that generates \(CRNs\), which are the parameters of the means.
If the parameters of the means are rational numbers, then it is easily algorithmically checked whether rational numbers are equal to one another, and thus is decidable. However, we have proved that it is not always so in a general power means inequality.

\end{document}